\providecommand{\U}[1]{\protect \rule{.1in}{.1in}}
\newtheorem{theorem}{Theorem}
\newtheorem{corollary}{Corollary}
\newtheorem{proposition}{Proposition}
\newtheorem{remark}{Remark}
\newenvironment{proof}[1][Proof]{\noindent \textbf{#1.} }{\  \rule{0.5em}{0.5em}}
\begin{document}

\title{On Stability of the Linearized Spacecraft Attitude Control System}
\author{Bin Zhou\thanks{Center for Control Theory and Guidance Technology, Harbin
Institute of Technology, P.O. Box 416, Harbin, 150001, China. Email:
\texttt{binzhou@hit.edu.cn; binzhoulee@163.com.}}}
\date{}
\maketitle

\begin{abstract}
This note is concerned with the stability and stabilization of the linearized
spacecraft attitude control system. Necessary and sufficient conditions are
respectively provided to guarantee that the considered systems are
polynomially stable and stable in the Lyapunov sense. These two classes of
conditions guarantee that the linearized spacecraft attitude control system
can be respectively stabilized semi-globally and globally by saturated linear
state feedback. \vspace{0.3cm}

\textbf{Keywords:} Spacecraft Attitude control; Stability and stabilization;
Lyapunov stable; Saturated feedback.

\end{abstract}

\section{Introduction}

Attitude stabilization of spacecraft is a very important problem since it
helps the spacecraft to maintain a certain prescribed attitude in the space
during their useful life \cite{gerlach65ssr}, \cite{la04auto},
\cite{psiaki01jgcd}, \cite{rh11icrast}, \cite{tsiotras}, \cite{wertz78book}.
Attitude stabilization controllers design can be based on either the nonlinear
dynamics or the linearized dynamics. If the controllers are designed based on
the nonlinear model, the design procedure can be quite complex while the
claimed (global) stability can be ensured. If the controllers are designed
based on the linear model, the design procedure can be much more simpler since
a lot of linear design techniques are available, however, the claimed (global)
stability of the practical closed-loop system may not be maintained. As usual,
a trade-off thus exists.

If we are interested in the design of spacecraft attitude stabilizing
controllers based on the linearized model, it would be very helpful if we can
know some specific properties of the linearized model. This is particularly
the case if we are considering the constraints existing in the actuators since
the existence of a controller is highly dependent on the stability properties
of the open-loop system (see, for example, \cite{lin93}, \cite{ssy94tac},
\cite{zdl08tac}, and \cite{zhou15auto}). Hence, in this note, we are
interested in the stability and stabilization of the linearized spacecraft
attitude control system. Particularly, we will provide conditions on the
spacecraft parameters under which the linearized model is respectively
polynomially stable and is stable in the Lyapunov sense. We notice that in
these two cases the linearized spacecraft attitude control system can be
respectively stabilized semi-globally and globally by saturated linear
feedback (see \cite{zdl08tac} and \cite{zhou15auto}).

\section{Main Results}

We consider the following linear system%
\begin{equation}
\dot{\chi}=A\chi+Bu,\label{macs}%
\end{equation}
where $A$ and $B$ are given by \cite{psiaki01jgcd}, \cite{rh11icrast},
\cite{wertz78book}%
\begin{equation}
A=\left[
\begin{array}
[c]{cccccc}%
0 & 0 & 0 & 1 & 0 & 0\\
0 & 0 & 0 & 0 & 1 & 0\\
0 & 0 & 0 & 0 & 0 & 1\\
-4\omega_{0}^{2}\sigma_{1} & 0 & 0 & 0 & 0 & \omega_{0}\left(  1-\sigma
_{1}\right)  \\
0 & -3\omega_{0}^{2}\sigma_{2} & 0 & 0 & 0 & 0\\
0 & 0 & -\omega_{0}^{2}\sigma_{3} & \omega_{0}\left(  \sigma_{3}-1\right)   &
0 & 0
\end{array}
\right]  ,B=\left[
\begin{array}
[c]{ccc}%
0 & 0 & 0\\
0 & 0 & 0\\
0 & 0 & 0\\
\frac{1}{J_{x}} & 0 & 0\\
0 & \frac{1}{J_{y}} & 0\\
0 & 0 & \frac{1}{J_{z}}%
\end{array}
\right]  .\label{BC}%
\end{equation}
This linear system is the linearized model of the spacecraft attitude control
system. For detailed derivation of this equation, see \cite{psiaki01jgcd},
\cite{rh11icrast}, \cite{wertz78book}, \cite{zhou15auto}, and the references
therein. In the above equation, the symbol $\omega_{0}=\sqrt{\frac{\mu}{r^{3}%
}}$ denotes the orbital rate, in which $\mu=3.986\times10^{14}\mathrm{m}%
^{3}/\mathrm{s}^{2}$ is the gravity constant and $r$ is the semimajor axis of
the orbit, $J_{x},J_{y}$ and $J_{z}$ are the inertia of the spacecraft, $\chi$
contains the attitude angles and velocities, $u$ is the forces applied on the
spacecraft, and
\begin{equation}
\sigma_{1}=\frac{J_{y}-J_{z}}{J_{x}},\; \sigma_{2}=\frac{J_{x}-J_{z}}{J_{y}%
},\; \sigma_{3}=\frac{J_{y}-J_{x}}{J_{z}}.\label{eqs123}%
\end{equation}
In this note, we are interested in the stability properties of system
(\ref{macs}).

\begin{remark}
We notice that $\sigma_{i},i=1,2,3$ are dependent. In fact, by letting
$\frac{J_{x}}{J_{y}}=\beta_{1},\frac{J_{y}}{J_{z}}=\beta_{2},$ then%
\begin{equation}
\sigma_{1}=\frac{1}{\beta_{1}}-\frac{1}{\beta_{1}\beta_{2}},\; \sigma_{2}%
=\beta_{1}-\frac{1}{\beta_{2}},\; \sigma_{3}=\beta_{2}-\beta_{1}\beta
_{2},\label{eqadd1}%
\end{equation}
which implies that $\sigma_{i},i=1,2,3$ can be characterized by the two
scalars $\beta_{i},i=1,2.$
\end{remark}

Consider the following nonsingular matrices%
\begin{equation}
H=\left[
\begin{array}
[c]{cccccc}%
0 & 1 & 0 & 0 & 0 & 0\\
0 & 0 & 0 & 0 & \frac{1}{\omega_{0}} & 0\\
0 & 0 & 1 & 0 & 0 & 0\\
0 & 0 & 0 & \frac{1}{\omega_{0}} & 0 & 0\\
1 & 0 & 0 & 0 & 0 & 0\\
0 & 0 & 0 & 0 & 0 & \frac{1}{\omega_{0}}%
\end{array}
\right]  ,\;L=\left[
\begin{array}
[c]{ccc}%
0 & 1 & 0\\
1 & 0 & 0\\
0 & 0 & 1
\end{array}
\right]  \omega_{0}^{2},\label{eqH}%
\end{equation}
then it can be verified that
\begin{equation}
\left \{
\begin{array}
[c]{rl}%
HAH^{-1} & =\omega_{0}\left[
\begin{array}
[c]{ccc}%
A_{2} & 0 & 0\\
0 & 0 & A_{1}\\
0 & A_{3} & 0
\end{array}
\right]  \triangleq \omega_{0}\left[
\begin{array}
[c]{cc}%
A_{2} & 0\\
0 & A_{13}%
\end{array}
\right]  \triangleq \omega_{0}A_{0},\\
HBL & =\omega_{0}\left[
\begin{array}
[c]{ccc}%
B_{2} & 0 & 0\\
0 & B_{1} & 0\\
0 & 0 & B_{3}%
\end{array}
\right]  \triangleq \omega_{0}\left[
\begin{array}
[c]{cc}%
B_{2} & 0\\
0 & B_{13}%
\end{array}
\right]  \triangleq \omega_{0}B_{0},
\end{array}
\right.  \label{eqab}%
\end{equation}
where $\left(  A_{1},A_{2},A_{3}\right)  $ and $\left(  B_{1},B_{2}%
,B_{3}\right)  $ are independent of $\omega_{0}$ and are given by%
\begin{equation}
\left \{
\begin{array}
[c]{ll}%
A_{1}=\left[
\begin{array}
[c]{cc}%
0 & 1\\
-4\sigma_{1} & 1-\sigma_{1}%
\end{array}
\right]  ,\; & B_{1}=\left[
\begin{array}
[c]{c}%
0\\
\frac{1}{J_{x}}%
\end{array}
\right]  ,\\
A_{2}=\left[
\begin{array}
[c]{cc}%
0 & 1\\
-3\sigma_{2} & 0
\end{array}
\right]  ,\; & B_{2}=\left[
\begin{array}
[c]{c}%
0\\
\frac{1}{J_{y}}%
\end{array}
\right]  ,\\
A_{3}=\left[
\begin{array}
[c]{cc}%
0 & 1\\
-\sigma_{3} & \sigma_{3}-1
\end{array}
\right]  ,\; & B_{3}=\left[
\begin{array}
[c]{c}%
0\\
\frac{1}{J_{z}}%
\end{array}
\right]  .
\end{array}
\right.  \label{eqA123}%
\end{equation}

Our first result provides necessary and sufficient conditions to guarantee
that $A$ is polynomially stable, namely, $\operatorname{Re}\{s\} \leq0,\forall
s\in \lambda \left(  A\right)  ,$ the eigenvalue set of $A.$

\begin{proposition}
\label{claim0}All the eigenvalues of $A$ have non-positive real parts if and
only if%
\begin{equation}
\left \{
\begin{array}
[c]{l}%
0\leq \sigma_{2},\\
0\leq \sigma_{1}\sigma_{3}\triangleq \phi_{1},\\
0\leq3\sigma_{1}+\sigma_{3}\sigma_{1}+1\triangleq \phi_{2},\\
0\leq \left(  3\sigma_{1}+1+\sigma_{3}\sigma_{1}\right)  ^{2}-16\sigma
_{1}\sigma_{3}=\phi_{2}^{2}-16\phi_{1}.
\end{array}
\right.  \label{cond}%
\end{equation}
Moreover, if all the above conditions are satisfied, the eigenvalues
$s_{i},i\in \mathbf{I}\left[  1,6\right]  $ of $A$ are given by%
\begin{equation}
\left \{
\begin{array}
[c]{l}%
s_{1,2}=\pm \sqrt{3\sigma_{2}}\omega_{0}\mathrm{i},\\
s_{3,4}=\pm \sqrt{\frac{\phi_{2}+\sqrt{\phi_{2}^{2}-16\phi_{1}}}{2}}\omega
_{0}\mathrm{i},\\
s_{5,6}=\pm \sqrt{\frac{\phi_{2}-\sqrt{\phi_{2}^{2}-16\phi_{1}}}{2}}\omega
_{0}\mathrm{i}.
\end{array}
\right.  \label{eig}%
\end{equation}

\end{proposition}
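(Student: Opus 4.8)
The plan is to reduce everything to the block-diagonal form already recorded in (\ref{eqab}). Since $H$ is nonsingular, $A$ is similar to $\omega_{0}A_{0}=\omega_{0}\,\mathrm{diag}(A_{2},A_{13})$, so $\lambda(A)=\omega_{0}\lambda(A_{0})=\omega_{0}\bigl(\lambda(A_{2})\cup\lambda(A_{13})\bigr)$; because $\omega_{0}>0$ is real, $\operatorname{Re}\{s\}\le0$ for all $s\in\lambda(A)$ if and only if $\operatorname{Re}\{s\}\le0$ holds for every eigenvalue of $A_{2}$ and of $A_{13}$ separately. I would therefore treat the $2\times2$ block $A_{2}$ and the $4\times4$ block $A_{13}$ independently, and obtain the list (\ref{eig}) at the end by multiplying the eigenvalues of $A_{0}$ by $\omega_{0}$.

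For $A_{2}$ the characteristic polynomial is $s^{2}+3\sigma_{2}$, with roots $\pm\sqrt{-3\sigma_{2}}$; these have non-positive real part exactly when $\sigma_{2}\ge0$, in which case they equal $\pm\sqrt{3\sigma_{2}}\,\mathrm{i}$, which (after scaling) gives $s_{1,2}$ together with the first inequality in (\ref{cond}).

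For $A_{13}=\bigl[\begin{smallmatrix}0&A_{1}\\A_{3}&0\end{smallmatrix}\bigr]$ I would first observe the origin-symmetry $\mathrm{diag}(I,-I)\,A_{13}\,\mathrm{diag}(I,-I)=-A_{13}$, so that $\lambda(A_{13})$ is symmetric about the origin; consequently all eigenvalues of $A_{13}$ have non-positive real part if and only if all of them are purely imaginary. Using the identity $\det(sI-A_{13})=\det(s^{2}I-A_{1}A_{3})$ together with $\operatorname{tr}(A_{1}A_{3})=-(3\sigma_{1}+1+\sigma_{1}\sigma_{3})=-\phi_{2}$ and $\det(A_{1}A_{3})=\det(A_{1})\det(A_{3})=(4\sigma_{1})(\sigma_{3})=4\phi_{1}$, the characteristic polynomial of $A_{13}$ is $s^{4}+\phi_{2}s^{2}+4\phi_{1}$. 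Substituting $t=s^{2}$, the eigenvalues of $A_{13}$ are all purely imaginary precisely when both roots of $t^{2}+\phi_{2}t+4\phi_{1}=0$ are real and non-positive, i.e. the discriminant $\phi_{2}^{2}-16\phi_{1}\ge0$, the sum of roots $-\phi_{2}\le0$, and the product $4\phi_{1}\ge0$; these are the remaining three inequalities in (\ref{cond}). Solving the quadratic for $t$ and taking $s=\pm\sqrt{t}$ with $t\le0$ yields $s_{3,4}$ and $s_{5,6}$, which after the scaling by $\omega_{0}$ are exactly those in (\ref{eig}).

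I expect the only conceptual point to be the origin-symmetry argument that promotes ``non-positive real part'' to ``purely imaginary'' for the $A_{13}$ block; the rest (computing $\operatorname{tr}(A_{1}A_{3})$ and $\det(A_{1}A_{3})$, verifying the block-determinant identity, and rewriting ``real non-positive roots of a quadratic'' in terms of its coefficients) is routine, if somewhat error-prone. As a sanity check, since $s^{4}+\phi_{2}s^{2}+4\phi_{1}$ is even in $s$, a direct Routh--Hurwitz analysis of it produces the same three conditions, so the symmetry shortcut merely shortens the argument rather than being indispensable.
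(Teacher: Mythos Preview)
Your proposal is correct and follows essentially the same route as the paper: reduce via the similarity $HAH^{-1}=\omega_{0}\,\mathrm{diag}(A_{2},A_{13})$, handle $A_{2}$ trivially, use the block identity $\det(sI_{4}-A_{13})=\det(s^{2}I_{2}-A_{1}A_{3})$ to see that the spectrum of $A_{13}$ is origin-symmetric and governed by the quadratic $\lambda^{2}+\phi_{2}\lambda+4\phi_{1}$, and then read off (\ref{cond}) as the conditions for both roots of that quadratic to be real and non-positive. The only cosmetic difference is that you phrase the origin-symmetry via the conjugation $\mathrm{diag}(I,-I)\,A_{13}\,\mathrm{diag}(I,-I)=-A_{13}$, whereas the paper extracts it directly from the evenness of $\det(s^{2}I_{2}-A_{1}A_{3})$ in $s$; the two observations are equivalent.
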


\begin{proof}
Clearly, any $s\in \lambda \left(  A_{2}\right)  $ has non-positive real parts
if and only if $\sigma_{2}\geq0.$ On the other hand, if $s\in \lambda \left(
A_{13}\right)  ,$ then it follows from%
\begin{equation}
\det \left(  sI_{4}-A_{13}\right)  =\det \left[
\begin{array}
[c]{cc}%
sI_{2} & -A_{1}\\
-A_{3} & sI_{2}%
\end{array}
\right]  =\det \left(  s^{2}I_{2}-A_{1}A_{3}\right)  ,\label{eq78}%
\end{equation}
that $-s\in \lambda \left(  A_{13}\right)  .$ Hence $\operatorname{Re}%
\{s\} \leq0$ if and only if $\operatorname{Re}\{s\}=0.$ This is equivalent to
that all the eigenvalues of $A_{1}A_{3}$ are non-positive real numbers. Let%
\begin{equation}
f\left(  \lambda \right)  =\det \left(  \lambda I_{2}-A_{1}A_{3}\right)
=\lambda^{2}+\phi_{2}\lambda+4\phi_{1}.\label{eqadd2}%
\end{equation}
Hence we must have $f\left(  0\right)  \geq0,-\frac{\phi_{2}}{2}\leq
0,\Delta=\phi_{2}^{2}-16\phi_{1}\geq0,$ which are equivalent to (\ref{cond}).
Finally, if (\ref{cond}) are satisfied, the eigenvalues of $A$ can be easily
computed by noting (\ref{eq78}). The proof is finished.
\end{proof}

Under the condition that all the eigenvalues of $A$ have non-positive real
parts, the linear system (\ref{macs}) can be semi-globally stabilized by
linear feedback in the presence of actuator saturation (see, for example,
\cite{lin93} and \cite{zld11tac}). Particularly, explicit semi-globally
stabilizing controllers can be designed by the parametric Lyapunov equation
approach in \cite{zdl08tac}.

We next provide necessary and sufficient conditions to guarantee that $A$ is
Lyapunov stable (or neutrally stable), namely, all the eigenvalues of $A$ have
non-positive real parts and those eigenvalues on the imaginary axis are simple
(or equivalently, any eigenvalue on the imaginary axis has the same algebraic
and geometric multiplicities).

\begin{theorem}
\label{pp6}Let $A$ be given by (\ref{BC}). Then $A$ is Lyapunov stable if and
only if $\left(  \sigma_{1},\sigma_{2},\sigma_{3}\right)  $ satisfies
\begin{equation}
\left \{
\begin{array}
[c]{l}%
0<\sigma_{2},\\
0<\phi_{1},\\
0<\phi_{2},\\
0<\phi_{2}^{2}-16\phi_{1}.
\end{array}
\right.  \label{cond0}%
\end{equation}

\end{theorem}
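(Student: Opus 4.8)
The plan is to build on Proposition~\ref{claim0}, which already characterizes when all eigenvalues of $A$ lie on the imaginary axis and lists them explicitly in \eqref{eig}. Since Lyapunov stability requires non-positive real parts \emph{plus} semisimplicity of every imaginary-axis eigenvalue, I would first observe that the strict inequalities in \eqref{cond0} are exactly the conditions of \eqref{cond} with equality excluded, so that under \eqref{cond0} the eigenvalues $s_{1,2},s_{3,4},s_{5,6}$ from \eqref{eig} are all purely imaginary and nonzero; I would then show that these strict inequalities are precisely what forces the three conjugate pairs to be \emph{distinct} and each of algebraic multiplicity one. Because the block decomposition $HAH^{-1}=\omega_0\,\mathrm{diag}(A_2,A_{13})$ from \eqref{eqab} is a similarity transformation, $A$ is Lyapunov stable iff $A_2$ and $A_{13}$ are, so I can treat the $2\times2$ block $A_2$ and the $4\times4$ block $A_{13}$ separately.

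For the $A_2$ block: its characteristic polynomial is $\lambda^2+3\sigma_2$, so its eigenvalues are $\pm\sqrt{3\sigma_2}\,\mathrm{i}$. When $\sigma_2>0$ these are a distinct nonzero conjugate pair, hence $A_2$ is semisimple with purely imaginary spectrum; when $\sigma_2=0$ we get a double eigenvalue at $0$, and since $A_2=\left[\begin{smallmatrix}0&1\\0&0\end{smallmatrix}\right]$ is a nontrivial Jordan block, Lyapunov stability fails. So the $A_2$ part contributes exactly the condition $0<\sigma_2$. For the $A_{13}$ block I would use \eqref{eq78}: the eigenvalues of $A_{13}$ are the square roots of the eigenvalues of $A_1A_3$, whose characteristic polynomial is $f(\lambda)=\lambda^2+\phi_2\lambda+4\phi_1$ from \eqref{eqadd2}. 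Under \eqref{cond0} we have $f(0)=4\phi_1>0$, $-\phi_2/2<0$, and discriminant $\phi_2^2-16\phi_1>0$, so $f$ has two \emph{distinct} negative real roots $\mu_\pm=\tfrac{-\phi_2\pm\sqrt{\phi_2^2-16\phi_1}}{2}\cdot(-1)$—more carefully, the two roots of $f$ are negative and unequal—hence $A_1A_3$ has two distinct negative eigenvalues, so $A_{13}$ has four distinct purely imaginary eigenvalues $\pm\sqrt{|\mu_\pm|}\,\mathrm{i}$ and is therefore semisimple. This gives the sufficiency direction.

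For necessity, I would argue the contrapositive: if any one of the four strict inequalities in \eqref{cond0} fails while the weak inequalities \eqref{cond} still hold (the case where \eqref{cond} itself fails is already handled by Proposition~\ref{claim0}, giving an eigenvalue with positive real part), then a repeated imaginary eigenvalue with a nontrivial Jordan block appears. If $\sigma_2=0$, the $A_2$ block already fails as above. If $\phi_1=0$ then $f(\lambda)=\lambda^2+\phi_2\lambda=\lambda(\lambda+\phi_2)$ has $0$ as a root, so $A_1A_3$ is singular; I need to check that $A_{13}$ then has a nontrivial Jordan block at $0$, which follows because $A_{13}$ has the companion-like structure $\left[\begin{smallmatrix}0&I\\A_3&0\end{smallmatrix}\right]$ (up to the block ordering in \eqref{eqab}) and a direct rank computation shows the geometric multiplicity of the zero eigenvalue is strictly less than its algebraic multiplicity. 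If $\phi_2^2-16\phi_1=0$ (with $\phi_1>0$, $\phi_2>0$), then $f$ has a double negative root $\mu_0$, so $A_1A_3$ has eigenvalue $\mu_0$ with algebraic multiplicity $2$; I then check that $A_{13}$ has the double eigenvalue $\pm\sqrt{|\mu_0|}\,\mathrm{i}$ non-semisimple, again via a rank/Jordan computation on $A_{13}$. The case $\phi_2=0$: then $\phi_2^2-16\phi_1=-16\phi_1\le 0$, so either $\phi_1=0$ (already covered) or the discriminant condition fails, pushing us outside \eqref{cond} and hence back to Proposition~\ref{claim0}.

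The main obstacle I anticipate is the Jordan-structure bookkeeping for the $4\times4$ block $A_{13}$ in the degenerate cases: knowing that $A_1A_3$ has a repeated eigenvalue does not by itself tell us whether $A_{13}$'s corresponding repeated eigenvalue is semisimple, so I will need to compute $\operatorname{rank}(s I_4 - A_{13})$ at the offending value of $s$ explicitly, using the explicit entries of $A_1$ and $A_3$ from \eqref{eqA123}. In particular I must rule out the ``accidental'' possibility that $A_1A_3$ is a scalar multiple of the identity (which would make $A_{13}$ semisimple even with a repeated eigenvalue)—but that would force $A_1A_3=\mu_0 I_2$, which one checks is incompatible with the off-diagonal structure of $A_1$ and $A_3$ unless parameters are degenerate in a way excluded here. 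Once that case analysis is in place, combining the $A_2$ and $A_{13}$ contributions yields the stated equivalence.
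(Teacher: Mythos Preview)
Your approach is essentially the paper's own: reduce to the blocks $A_{2}$ and $A_{13}$, handle $A_{2}$ directly, use the quadratic $f(\lambda)=\lambda^{2}+\phi_{2}\lambda+4\phi_{1}$ for sufficiency on $A_{13}$, and treat necessity by the contrapositive case split $\sigma_{2}=0$, $\phi_{1}=0$, $\phi_{2}=0$, $\phi_{2}^{2}-16\phi_{1}=0$, with a rank computation on $sI_{4}-A_{13}$ in the last case to exhibit a defective eigenvalue. The paper performs exactly this rank computation (its inequality \eqref{eqadd3}) and, like you, reduces it to checking that the $2\times 2$ matrix $\lambda^{2}I_{2}-A_{3}A_{1}$ is nonzero, i.e.\ that $A_{3}A_{1}$ is not scalar.

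There is one genuine gap in your $\phi_{1}=0$ case. Your claim that ``a direct rank computation shows the geometric multiplicity of the zero eigenvalue is strictly less than its algebraic multiplicity'' is false when $\sigma_{1}=\sigma_{3}=0$: then $A_{1}=\left[\begin{smallmatrix}0&1\\0&1\end{smallmatrix}\right]$ and $A_{3}=\left[\begin{smallmatrix}0&1\\0&-1\end{smallmatrix}\right]$ each have rank $1$, so $\mathrm{rank}(A_{13})=2$ and the zero eigenvalue of $A_{13}$ has geometric multiplicity $2$, equal to its algebraic multiplicity (the characteristic polynomial is $s^{2}(s^{2}+1)$). Thus $A_{13}$ \emph{is} Lyapunov stable in this sub-case, and your rank argument cannot rule it out. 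The paper closes this hole by invoking the physical dependency \eqref{eqs123}: $\sigma_{1}=\sigma_{3}=0$ forces $J_{x}=J_{y}=J_{z}$ and hence $\sigma_{2}=0$, so the failure of Lyapunov stability comes from $A_{2}$, not $A_{13}$. You need to insert this observation (or equivalently, treat the $\phi_{1}=0$ case only after having already secured $\sigma_{2}>0$ and noting that this excludes $\sigma_{1}=\sigma_{3}=0$); otherwise the necessity argument for $\phi_{1}>0$ is incomplete.
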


\begin{proof}
Clearly, $A$ is Lyapunov stable if and only if $A_{2}$ and $A_{13}$ are all
Lyapunov stable. Moreover, the matrix $A_{2}$ is Lyapunov stable if and only
if $\sigma_{2}>0$ since $A_{2}$ has repeated eigenvalues 0 and is nonzero when
$\sigma_{2}=0.$

Obviously, if $\phi_{i}>0,i=1,2,3,$ then $A_{12}$ has four different
eigenvalues on the imaginary axis and is thus Lyapunov stable. Hence, we only
need to show the converse, namely, if $A_{13}$ is Lyapunov stable, then
$\phi_{i}>0,i=1,2,3.$

We first show that the Lyapunov stability of $A_{13}$ implies $\phi_{1}>0.$ We
show this by contradiction. Let $\phi_{1}=0.$ If $\sigma_{1}=\sigma_{3}=0,$
then we must have $\sigma_{2}=0$ (see equation (\ref{eqs123}))$,$ which is not
allowed. Hence we have either $\left(  \sigma_{1}=0,\sigma_{3}\neq0\right)  $
or $\left(  \sigma_{3}=0,\sigma_{1}\neq0\right)  .$ If $\left(  \sigma
_{1}=0,\sigma_{3}\neq0\right)  ,$ then $A_{13}$ has repeated eigenvalues $0$
with geometric multiplicity 1, and hence is not Lyapunov stable. If $\left(
\sigma_{3}=0,\sigma_{1}\neq0\right)  ,$ we consider two cases. Case 1:
$\sigma_{1}\neq-\frac{1}{3}.$ Then the algebraic and geometric multiplicities
of 0 are respectively 2 and 1, which implies that $A_{13}$ is not Lyapunov
stable. Case 2: $\sigma_{1}=-\frac{1}{3}.$ Then the algebraic and geometric
multiplicities of 0 are respectively 4 and 1, which, again, implies that
$A_{13}$ is not Lyapunov stable. In conclusion, we must have $\phi_{1}>0.$

We next show that the Lyapunov stability of $A_{13}$ implies $\phi_{2}>0.$
Otherwise, if $\phi_{2}=0,$ then $\phi_{2}^{2}-16\phi_{1}\geq0$ is equivalent
to $\phi_{1}\leq0,$ which contradicts with $\phi_{1}>0.$ Hence we must have
$\phi_{2}>0.$

We finally show that the Lyapunov stability of $A_{13}$ implies $\phi_{2}%
^{2}-16\phi_{1}>0.$ Otherwise, if $\phi_{2}^{2}-16\phi_{1}=0,$ then $A_{13}$
has repeated eigenvalues $\lambda_{3,4}=\pm \sqrt{\phi_{2}/2}\mathrm{i.}$
Notice that, for any $\lambda=\lambda_{i}\neq0,i=3,4,$ we have%
\begin{align}
\mathrm{rank}\left[
\begin{array}
[c]{cc}%
\lambda I_{2} & -A_{1}\\
-A_{3} & \lambda I_{2}%
\end{array}
\right]   &  =\mathrm{rank}\left[
\begin{array}
[c]{cc}%
I_{2} & 0\\
A_{3} & \lambda I_{2}%
\end{array}
\right]  \left[
\begin{array}
[c]{cc}%
\lambda I_{2} & -A_{1}\\
-A_{3} & \lambda I_{2}%
\end{array}
\right]  \nonumber \\
&  =\mathrm{rank}\left[
\begin{array}
[c]{cc}%
\lambda I_{2} & -A_{1}\\
0 & \lambda^{2}I_{2}-A_{3}A_{1}%
\end{array}
\right]  \nonumber \\
&  =2+\mathrm{rank}\left[  \lambda^{2}I_{2}-A_{3}A_{1}\right]  \nonumber \\
&  \geq3,\label{eqadd3}%
\end{align}
where we have noticed that the matrix
\begin{equation}
\lambda^{2}I_{2}-A_{3}A_{1}=\left[
\begin{array}
[c]{cc}%
\left(  \frac{5}{2}-\frac{1}{2}\sigma_{3}\right)  \sigma_{1}-\frac{1}{2} &
\sigma_{1}-1\\
4\left(  \sigma_{3}-1\right)  \sigma_{1} & \frac{1}{2}+\left(  \frac{1}%
{2}\sigma_{3}-\frac{5}{2}\right)  \sigma_{1}%
\end{array}
\right]  ,\label{eqadd4}%
\end{equation}
cannot be zero. Hence the geometric multiplicities of $\lambda_{i},i=3,4,$ is
1 and $A_{13}$ cannot be Lyapunov stable. The proof is finished.
\end{proof}

Consider the following Lyapunov equation%
\begin{equation}
A^{\mathrm{T}}P+PA=-D^{\mathrm{T}}D,\label{lya}%
\end{equation}
where $D$ is any matrix with proper dimensions. It is well known that the
above Lyapunov equation has a solution $P>0$ for some $D$ if and only if $A$
is neutrally stable (Lyapunov stable). Then we have the following corollary.

\begin{corollary}
\label{coro1}There exists a $P>0$ such that the Lyapunov equation (\ref{lya})
is satisfied for some $D$ if and only if $D=0$ and (\ref{cond0}) are satisfied.
\end{corollary}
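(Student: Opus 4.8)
The plan is to prove Corollary~\ref{coro1} by leveraging Theorem~\ref{pp6} together with the standard fact, already cited in the text preceding the statement, that the Lyapunov equation \eqref{lya} admits a solution $P>0$ for some $D$ if and only if $A$ is neutrally stable. The only genuine work is to show that when $A$ is neutrally stable, the only admissible $D$ is $D=0$; the converse direction is almost immediate, since if $D=0$ and \eqref{cond0} holds, then $A$ is Lyapunov stable by Theorem~\ref{pp6}, so there exists $P>0$ with $A^{\mathrm{T}}P+PA=0\le 0$, which is exactly \eqref{lya} with $D=0$.

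For the forward direction, suppose $P>0$ and $D$ satisfy \eqref{lya}. Since the existence of such a $P>0$ forces $A$ to be neutrally stable, Theorem~\ref{pp6} gives \eqref{cond0}; in particular, every eigenvalue of $A$ lies on the imaginary axis and is simple, and by \eqref{eig} the six eigenvalues are $\pm\mathrm{i}\alpha_1,\pm\mathrm{i}\alpha_2,\pm\mathrm{i}\alpha_3$ with $\alpha_1=\sqrt{3\sigma_2}\,\omega_0$ and $\alpha_{2,3}=\sqrt{(\phi_2\mp\sqrt{\phi_2^2-16\phi_1})/2}\,\omega_0$, all real and (under \eqref{cond0}) nonzero. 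Thus $A$ is diagonalizable over $\mathbb{C}$ with purely imaginary spectrum. The key step is then to argue that $D=0$: taking any eigenvector $v$ of $A$ with $Av=\mathrm{i}\alpha v$, $\alpha\in\mathbb{R}$, and forming $v^{\ast}(A^{\mathrm{T}}P+PA)v$ — more precisely working with $\bar A^{\mathrm{T}}=A^{\ast}$ so that $A^{\ast}v^{\ast\mathrm{T}}$... I will instead use the cleaner route: for an eigenvector $v$ with $Av=\mathrm{i}\alpha v$ we have $v^{\ast}A^{\mathrm{T}}P v + v^{\ast}PAv = \overline{(Av)}^{\mathrm{T}}... $ — to keep it rigorous, note $v^{\ast}(A^{\ast}P+PA)v = \overline{\mathrm{i}\alpha}\,v^{\ast}Pv + \mathrm{i}\alpha\,v^{\ast}Pv = 0$ since $A^{\ast}=A^{\mathrm{T}}$ has $A^{\ast}v = ?$... the correct and standard computation is: $-v^{\ast}D^{\mathrm{T}}Dv = v^{\ast}A^{\mathrm{T}}Pv + v^{\ast}PAv$, and since $A^{\mathrm{T}}$ acts on the left, one uses instead the left eigenvector. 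Cleanly: let $w\neq 0$ with $w^{\ast}A=\mathrm{i}\alpha w^{\ast}$ (a left eigenvector, $\alpha\in\mathbb{R}$ by \eqref{eig}); then $w^{\ast}A^{\mathrm{T}}P w$ is not directly useful, so instead take the right eigenvector $v$, $Av=\mathrm{i}\alpha v$, and compute $v^{\ast}(A^{\mathrm{T}}P+PA)v$. Since $v^{\ast}PAv = \mathrm{i}\alpha\, v^{\ast}Pv$ and $v^{\ast}A^{\mathrm{T}}Pv = \overline{v^{\mathrm{T}}\bar A (\bar P v)}$... The robust way, which I will use, is to diagonalize: write $A=T\Lambda T^{-1}$ with $\Lambda=\mathrm{diag}(\mathrm{i}\alpha_k)$; set $\tilde P=T^{\ast}PT>0$ and $\tilde D=DT$; then \eqref{lya} becomes $\Lambda^{\ast}\tilde P+\tilde P\Lambda=-\tilde D^{\ast}\tilde D$. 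Since $\Lambda^{\ast}=-\Lambda$ and reading off the $(k,k)$ entry gives $(-\mathrm{i}\alpha_k+\mathrm{i}\alpha_k)\tilde P_{kk}=0=-\sum_j|\tilde D_{jk}|^2$, forcing each column of $\tilde D$ to vanish, hence $\tilde D=0$ and therefore $D=\tilde D T^{-1}=0$.

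I expect the main obstacle to be purely expository: stating the diagonalization argument cleanly and making sure the algebraic/geometric multiplicity condition in the definition of Lyapunov stability (restated just before Theorem~\ref{pp6}) is correctly invoked to guarantee $A$ is genuinely diagonalizable with purely imaginary eigenvalues — not merely that its eigenvalues have nonpositive real part. Once diagonalizability is secured, the congruence transformation reduces \eqref{lya} to the skew-Hermitian case, where the diagonal entries immediately kill $\tilde D$. A small point to handle is that one must know a priori that $A$ is neutrally stable before applying Theorem~\ref{pp6}; this is exactly the "well known" fact quoted in the sentence introducing \eqref{lya}, so I will simply cite it. I will write the proof in three short moves: (i) forward direction via the quoted fact plus Theorem~\ref{pp6} plus the diagonalization/congruence computation yielding $D=0$ and \eqref{cond0}; (ii) converse direction: \eqref{cond0} $\Rightarrow$ neutral stability by Theorem~\ref{pp6} $\Rightarrow$ $\exists P>0$ with $A^{\mathrm{T}}P+PA=0$, and $D=0$ makes \eqref{lya} hold; (iii) conclude.

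\begin{proof}
($\Longrightarrow$) Suppose there exist $P>0$ and $D$ satisfying \eqref{lya}. By the well-known fact quoted above, $A$ must be neutrally stable, so Theorem~\ref{pp6} yields \eqref{cond0}. Under \eqref{cond0}, neutral stability means that all six eigenvalues of $A$ lie on the imaginary axis and are simple; in particular $A$ is diagonalizable over $\mathbb{C}$, say $A=T\Lambda T^{-1}$ with $\Lambda=\mathrm{diag}\{\mathrm{i}\alpha_1,\ldots,\mathrm{i}\alpha_6\}$ and $\alpha_k\in\mathbb{R}$ given by \eqref{eig}. Left- and right-multiplying \eqref{lya} by $T^{\ast}$ and $T$ and setting $\tilde{P}=T^{\ast}PT>0$, $\tilde{D}=DT$, we obtain
\begin{equation}
\Lambda^{\ast}\tilde{P}+\tilde{P}\Lambda=-\tilde{D}^{\ast}\tilde{D}.\label{eqc1}
\end{equation}
Since $\Lambda^{\ast}=-\Lambda$, the $(k,k)$ entry of the left-hand side of \eqref{eqc1} is $(-\mathrm{i}\alpha_k+\mathrm{i}\alpha_k)\tilde{P}_{kk}=0$, hence the $(k,k)$ entry of the right-hand side satisfies $\sum_{j}|\tilde{D}_{jk}|^{2}=0$ for every $k$. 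Therefore $\tilde{D}=0$, and consequently $D=\tilde{D}T^{-1}=0$.

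($\Longleftarrow$) Suppose \eqref{cond0} holds and $D=0$. By Theorem~\ref{pp6}, $A$ is neutrally stable, so there exists $P>0$ with $A^{\mathrm{T}}P+PA\le 0$; since the eigenvalues of $A$ are all simple and purely imaginary, one may in fact take $P>0$ with $A^{\mathrm{T}}P+PA=0$ (for instance $P=T^{-\ast}T^{-1}$ with $A=T\Lambda T^{-1}$ as above, using $\Lambda^{\ast}+\Lambda=0$). Then $A^{\mathrm{T}}P+PA=0=-D^{\mathrm{T}}D$, i.e. \eqref{lya} is satisfied. The proof is finished.
\end{proof}
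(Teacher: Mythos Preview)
Your proof is correct and follows essentially the same approach as the paper's. The paper tests the Lyapunov equation against each eigenvector $y_j$ of $A$ to obtain $-\|Dy_j\|^2=2\operatorname{Re}(s_j)\,y_j^{\mathrm{H}}Py_j=0$, whereas you package the same computation as a congruence by the eigenvector matrix $T$ and read off the diagonal of $\Lambda^{\ast}\tilde{P}+\tilde{P}\Lambda$; unwinding your $(k,k)$ entry gives exactly the paper's identity for $y_k$. Your treatment of the converse direction is slightly more explicit (you actually exhibit $P=T^{-\ast}T^{-1}$), but otherwise the two arguments coincide.
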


\begin{proof}
It follows from Theorem \ref{pp6} that $A$ is neutrally stable if and only if
(\ref{cond0}) are satisfied and, in this case, all the eigenvalues of $A$ are
on the imaginary axis. We next show that $D$ must be zero. Let $y_{i}%
,i\in \mathbf{I}\left[  1,6\right]  $ be the eigenvectors of $A$ corresponding
to the eigenvalue $s_{i}.$ Assume that $D$ is not zero. Then there exists a
$y_{j}$ such that $Dy_{j}\neq0$ (otherwise, we must have $D=0$). Consequently,
it follows from (\ref{lya}) that%
\begin{align}
-\left \Vert Dy_{j}\right \Vert ^{2} &  =-y_{j}^{\mathrm{H}}D^{\mathrm{T}}%
Dy_{j}\nonumber \\
&  =y_{j}^{\mathrm{H}}\left(  A^{\mathrm{T}}P+PA\right)  y_{j}\nonumber \\
&  =2\operatorname{Re}\left(  s_{j}\right)  y_{j}^{\mathrm{H}}Py_{j}%
\nonumber \\
&  =0,\label{eqadd5}%
\end{align}
which contradicts with $Dy_{j}\neq0.$ The proof is finished.
\end{proof}

We can actually give explicit solutions to the Lyapunov equation (\ref{lya}),
as shown in the following theorem.

\begin{theorem}
Assume that $\sigma_{1}\sigma_{2}\sigma_{3}\neq0.$ Then all the solutions to
the Lyapunov equation (\ref{lya}) with $D=0$ are given by%
\begin{equation}
P=H^{\mathrm{T}}P_{0}H,\;P_{0}=\left[
\begin{array}
[c]{ccc}%
P_{2} &  & \\
& P_{1} & \\
&  & P_{3}%
\end{array}
\right]  ,\label{eqP}%
\end{equation}
in which $P_{2}=\mathrm{diag}\{3\sigma_{2}\alpha_{2},\alpha_{2}\}$ with
$\alpha_{2}$ being any constants, and%
\begin{equation}
\left \{
\begin{array}
[c]{l}%
P_{1}=\left[
\begin{array}
[c]{cc}%
\sigma_{3}\left(  \alpha_{3}+\left(  1-\sigma_{1}\right)  \alpha_{13}\right)
& -\sigma_{3}\alpha_{13}\\
-\sigma_{3}\alpha_{13} & \alpha_{1}%
\end{array}
\right]  ,\\
P_{3}=\left[
\begin{array}
[c]{cc}%
4\sigma_{1}\left(  \alpha_{1}+\left(  1-\sigma_{3}\right)  \alpha_{13}\right)
& 4\sigma_{1}\alpha_{13}\\
4\sigma_{1}\alpha_{13} & \alpha_{3}%
\end{array}
\right]  ,
\end{array}
\right.  \label{eqp3}%
\end{equation}
where $\alpha_{1},\alpha_{3}$ and $\alpha_{13}$ are any scalars and such that%
\begin{equation}
\left(  1-\sigma_{1}\right)  \left(  \alpha_{1}-\frac{J_{x}}{J_{z}}\alpha
_{3}\right)  +\left(  4\sigma_{1}-\sigma_{3}\right)  \alpha_{13}=0.\label{eq1}%
\end{equation}
Particularly, if we choose $\alpha_{13}=0$ and $\alpha_{1}=\frac{J_{x}}{J_{z}%
}\alpha_{3},$ then we have
\begin{equation}
P_{1}=\mathrm{diag}\left \{  \sigma_{3}\alpha_{3},\frac{J_{x}}{J_{z}}\alpha
_{3}\right \}  ,\;P_{3}=\mathrm{diag}\left \{  4\frac{J_{x}}{J_{z}}\sigma
_{1}\alpha_{3},\alpha_{3}\right \}  .\label{eqp12}%
\end{equation}

\end{theorem}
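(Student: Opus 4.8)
The plan is to block‑diagonalize the coefficient matrix first and then solve the resulting decoupled equations block by block. Substituting $P=H^{\mathrm{T}}P_{0}H$ with $H$ as in (\ref{eqH}) and using $HAH^{-1}=\omega_{0}A_{0}$ from (\ref{eqab}), the equation $A^{\mathrm{T}}P+PA=0$ is, after multiplying by $H^{-\mathrm{T}}$ on the left and $H^{-1}$ on the right and cancelling $\omega_{0}\neq0$, equivalent to $A_{0}^{\mathrm{T}}P_{0}+P_{0}A_{0}=0$. Since $A_{0}=\mathrm{diag}\{A_{2},A_{13}\}$, I would write the (symmetric) unknown as $P_{0}=\left[\begin{smallmatrix}P_{2}& P_{c}\\ P_{c}^{\mathrm{T}}& P_{13}\end{smallmatrix}\right]$ with $P_{2}$ of size $2$ and $P_{13}$ of size $4$; the equation then separates into $A_{2}^{\mathrm{T}}P_{2}+P_{2}A_{2}=0$, the Sylvester equation $A_{2}^{\mathrm{T}}P_{c}+P_{c}A_{13}=0$, and $A_{13}^{\mathrm{T}}P_{13}+P_{13}A_{13}=0$.

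For the $A_{2}$ block, substituting a generic symmetric $2\times2$ matrix into $A_{2}^{\mathrm{T}}P_{2}+P_{2}A_{2}=0$ with $A_{2}$ from (\ref{eqA123}) forces the off‑diagonal entry to vanish and the diagonal entries to be in the ratio $3\sigma_{2}:1$, giving $P_{2}=\mathrm{diag}\{3\sigma_{2}\alpha_{2},\alpha_{2}\}$. For the $A_{13}$ block I would exploit its anti‑block form $A_{13}=\left[\begin{smallmatrix}0& A_{1}\\ A_{3}& 0\end{smallmatrix}\right]$: partitioning $P_{13}=\left[\begin{smallmatrix}P_{1}& Y\\ Y^{\mathrm{T}}& P_{3}\end{smallmatrix}\right]$ into $2\times2$ blocks, the equation splits into $A_{3}^{\mathrm{T}}Y^{\mathrm{T}}+YA_{3}=0$, $A_{1}^{\mathrm{T}}Y+Y^{\mathrm{T}}A_{1}=0$ and $A_{3}^{\mathrm{T}}P_{3}+P_{1}A_{1}=0$. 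The first two say that $A_{1}^{\mathrm{T}}Y$ and $YA_{3}$ are skew‑symmetric; writing each as a scalar multiple of $\left[\begin{smallmatrix}0&1\\-1&0\end{smallmatrix}\right]$ and eliminating $Y$ (legitimate since $\det A_{1}=4\sigma_{1}\neq0$) leaves a small homogeneous system whose only solution is trivial — a nonzero one would require $\sigma_{1}=1$, $\sigma_{3}=1$ and $\sigma_{3}=4\sigma_{1}$ simultaneously — hence $Y=0$. It then remains to solve $A_{3}^{\mathrm{T}}P_{3}+P_{1}A_{1}=0$, which is four scalar equations in the six entries of the symmetric $P_{1},P_{3}$; three of them determine $(P_{1})_{11}$, $(P_{3})_{11}$ and the proportion between $(P_{1})_{12}$ and $(P_{3})_{12}$ in terms of three free scalars $\alpha_{1},\alpha_{3},\alpha_{13}$, reproducing (\ref{eqp3}), while the remaining one is the single linear relation $(1-\sigma_{1})\alpha_{1}-(1-\sigma_{3})\alpha_{3}+(4\sigma_{1}-\sigma_{3})\alpha_{13}=0$. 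The only non‑obvious algebraic input is the inertia identity $J_{x}(1-\sigma_{1})=J_{z}(1-\sigma_{3})\,(=J_{x}+J_{z}-J_{y})$, which rewrites this relation as the stated constraint (\ref{eq1}); transforming back via $P=H^{\mathrm{T}}P_{0}H$ yields (\ref{eqP}), and the choice $\alpha_{13}=0$, $\alpha_{1}=\frac{J_{x}}{J_{z}}\alpha_{3}$ (which obviously satisfies (\ref{eq1})) gives the diagonal $P_{1},P_{3}$ in (\ref{eqp12}).

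The step I expect to require the most care is the cross block: the Sylvester equation $A_{2}^{\mathrm{T}}P_{c}+P_{c}A_{13}=0$ has only the trivial solution $P_{c}=0$ precisely when $A_{2}$ and $-A_{13}$ have no common eigenvalue, and since the spectrum of $A_{13}$ is symmetric about the origin by (\ref{eq78}), this means $-3\sigma_{2}\notin\lambda(A_{1}A_{3})$, i.e.\ $9\sigma_{2}^{2}-3\phi_{2}\sigma_{2}+4\phi_{1}\neq0$ in view of (\ref{eqadd2}). This is a mild nondegeneracy — generically true, and in particular compatible with (\ref{cond0}) off a lower‑dimensional set of parameters — and granting it the block‑diagonal ansatz for $P_{0}$ is necessary and not merely sufficient, so that (\ref{eqP}) lists all (symmetric) solutions. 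As a sanity check, the solution space is then three‑dimensional, namely $\alpha_{2}$ free together with $(\alpha_{1},\alpha_{3},\alpha_{13})$ modulo the one constraint (\ref{eq1}), which matches the three conjugate eigenvalue pairs of $A$. Everything else reduces to routine $2\times2$ computations.
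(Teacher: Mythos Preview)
Your approach coincides with the paper's: reduce to $A_{0}^{\mathrm{T}}P_{0}+P_{0}A_{0}=0$, split off the $A_{2}$ block, and within the $A_{13}$ block first eliminate the off-diagonal piece (your $Y$, the paper's $P_{13}$) by the same case analysis, then solve $A_{3}^{\mathrm{T}}P_{3}+P_{1}A_{1}=0$ entrywise, invoking the identity $\sigma_{3}-1=\tfrac{J_{x}}{J_{z}}(\sigma_{1}-1)$ at the last step. The one substantive difference is the $2\times4$ cross block $P_{c}$: the paper simply asserts it vanishes ``since $A_{i},i=1,2,3,$ are all nonsingular,'' whereas you correctly recognize that $P_{c}=0$ follows from the Sylvester condition $\lambda(A_{2})\cap\lambda(-A_{13})=\varnothing$, i.e.\ the nondegeneracy $9\sigma_{2}^{2}-3\phi_{2}\sigma_{2}+4\phi_{1}\neq0$; on this point your argument is actually more careful than the paper's.
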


\begin{proof}
We only need to show that all the solutions $P_{0}$ to the equation
$A_{0}^{\mathrm{T}}P_{0}+P_{0}A_{0}=0$ are in the form of (\ref{eqP}). Since
$A_{i},i=1,2,3,$ are all nonsingular, by noting the structure of $A_{0},$ we
know that $P_{0}$ takes the following form
\begin{equation}
P_{0}=\left[
\begin{array}
[c]{ccc}%
P_{2} & 0 & 0\\
0 & P_{1} & P_{13}\\
0 & P_{13} & P_{3}%
\end{array}
\right]  ,\label{eqp0}%
\end{equation}
which gives the following five equations%
\begin{equation}
\left \{
\begin{array}
[c]{l}%
0=A_{2}^{\mathrm{T}}P_{2}+P_{2}A_{2},\\
0=A_{3}^{\mathrm{T}}P_{13}^{\mathrm{T}}+P_{13}A_{3},\\
0=A_{1}^{\mathrm{T}}P_{13}+P_{13}^{\mathrm{T}}A_{1},\\
0=A_{3}^{\mathrm{T}}P_{3}+P_{1}A_{1},\\
0=A_{1}^{\mathrm{T}}P_{1}+P_{3}A_{3}.
\end{array}
\right.  \label{eq908}%
\end{equation}
We first consider the first equation. Let%
\begin{equation}
P_{2}=\left[
\begin{array}
[c]{cc}%
p_{21} & p_{22}\\
p_{22} & p_{23}%
\end{array}
\right]  ,\label{eqadd7}%
\end{equation}
where the parameters are to be specified. Then%
\begin{equation}
A_{2}^{\mathrm{T}}P_{2}+P_{2}A_{2}=\left[
\begin{array}
[c]{cc}%
-6\sigma_{2}p_{22} & p_{21}-3\sigma_{2}p_{23}\\
p_{21}-3\sigma_{2}p_{23} & 2p_{22}%
\end{array}
\right]  .\label{eqadd8}%
\end{equation}
Hence we must have $p_{22}=0$ and $p_{21}=3\sigma_{2}p_{23}.$ This is just the
desired result if we set $p_{23}=\alpha_{2}.$with $a_{2}$ being any constants.

We assume that $P_{13}$ takes the form%
\begin{equation}
P_{13}=\left[
\begin{array}
[c]{cc}%
p_{131} & p_{132}\\
p_{133} & p_{134}%
\end{array}
\right]  ,\label{eqadd9}%
\end{equation}
where the parameters are to be specified. Then we can compute%
\begin{align}
A_{3}^{\mathrm{T}}P_{13}^{\mathrm{T}}+P_{13}A_{3} &  =\left[
\begin{array}
[c]{cc}%
-2\sigma_{3}p_{132} & p_{131}+\left(  \sigma_{3}-1\right)  p_{132}-\sigma
_{3}p_{134}\\
p_{131}+\left(  \sigma_{3}-1\right)  p_{132}-\sigma_{3}p_{134} &
2p_{133}+2\left(  \sigma_{3}-1\right)  p_{134}%
\end{array}
\right]  ,\label{eqadd10}\\
A_{1}^{\mathrm{T}}P_{13}+P_{13}^{\mathrm{T}}A_{1} &  =\left[
\begin{array}
[c]{cc}%
-8\sigma_{1}p_{133} & p_{131}+\left(  1-\sigma_{1}\right)  p_{133}-4\sigma
_{1}p_{134}\\
p_{131}+\left(  1-\sigma_{1}\right)  p_{133}-4\sigma_{1}p_{134} &
2p_{132}+2\left(  1-\sigma_{1}\right)  p_{134}%
\end{array}
\right]  .\label{eqadd11}%
\end{align}
Hence we must have $p_{132}=0,p_{133}=0$ and $p_{131}=\sigma_{3}p_{134}.$
Consequently, we get%
\begin{align}
A_{3}^{\mathrm{T}}P_{13}^{\mathrm{T}}+P_{13}A_{3} &  =\left[
\begin{array}
[c]{cc}%
0 & 0\\
0 & 2\left(  \sigma_{3}-1\right)
\end{array}
\right]  p_{134},\label{eqadd12}\\
A_{1}^{\mathrm{T}}P_{13}+P_{13}^{\mathrm{T}}A_{1} &  =\left[
\begin{array}
[c]{cc}%
0 & \sigma_{3}-4\sigma_{1}\\
\sigma_{3}-4\sigma_{1} & 2\left(  1-\sigma_{1}\right)
\end{array}
\right]  p_{134}.\label{eqadd13}%
\end{align}
We consider two cases. Case 1: $\sigma_{1}\neq1.$ In this case we must have
$p_{134}=0.$ Case 2: $\sigma_{1}=1.$ Then we have $\left(  1-\sigma
_{3}\right)  p_{134}=0$ and $\left(  \sigma_{3}-4\right)  p_{134}=0.$ Since
$\sigma_{3}$ can not equal to 1 and 4 simultaneously, we also must have
$p_{134}=0.$ As a result, we have $P_{13}=0.$

We next let $P_{1}$ and $P_{3}$ take the form%
\begin{equation}
P_{1}=\left[
\begin{array}
[c]{cc}%
p_{11} & p_{12}\\
p_{12} & p_{13}%
\end{array}
\right]  ,P_{3}=\left[
\begin{array}
[c]{cc}%
p_{31} & p_{32}\\
p_{32} & p_{33}%
\end{array}
\right]  ,\label{eqadd14}%
\end{equation}
where the parameters are to be specified. Then%
\begin{align}
A_{3}^{\mathrm{T}}P_{3}+P_{1}A_{1} &  =\left[
\begin{array}
[c]{cc}%
-\sigma_{3}p_{32}-4\sigma_{1}p_{12} & -\sigma_{3}p_{33}+p_{11}+p_{12}%
-\sigma_{1}p_{12}\\
-p_{32}+\sigma_{3}p_{32}-4p_{13}\sigma_{1} & p_{32}-p_{33}+\sigma_{3}%
p_{33}+p_{12}+p_{13}-p_{13}\sigma_{1}%
\end{array}
\right]  ,\label{eqadd15}\\
A_{1}^{\mathrm{T}}P_{1}+P_{3}A_{3} &  =\left[
\begin{array}
[c]{cc}%
-\sigma_{3}p_{32}-4\sigma_{1}p_{12} & p_{31}-p_{32}+\sigma_{3}p_{32}%
-4p_{13}\sigma_{1}\\
-\sigma_{3}p_{33}+p_{11}+p_{12}-\sigma_{1}p_{12} & p_{32}-p_{33}+\sigma
_{3}p_{33}+p_{12}+p_{13}-p_{13}\sigma_{1}%
\end{array}
\right]  .\label{eqadd16}%
\end{align}
Hence we must have
\begin{equation}
\left \{
\begin{array}
[c]{l}%
p_{12}=-\sigma_{3}p_{32}/4/\sigma_{1},\\
p_{11}=-(-\sigma_{3}p_{33}+p_{12}-\sigma_{1}p_{12}),\\
p_{31}=-(-p_{32}+\sigma_{3}p_{32}-4p_{13}\sigma_{1}).
\end{array}
\right.  \label{eqadd17}%
\end{equation}
Consequently, the above two equations are satisfied if and only if%
\begin{equation}
\left(  1-\sigma_{1}\right)  p_{13}+\left(  1-\frac{\sigma_{3}}{4\sigma_{1}%
}\right)  p_{32}+\left(  \sigma_{3}-1\right)  p_{33}=0,\label{eqadd18}%
\end{equation}
which is
\begin{equation}
\left(  1-\sigma_{1}\right)  p_{13}+\left(  4\sigma_{1}-\sigma_{3}\right)
p_{32}+\left(  \sigma_{3}-1\right)  p_{33}=0.\label{eqadd19}%
\end{equation}
by setting $p_{32}\rightarrow4p_{32}\sigma_{1}.$ This is just (\ref{eq1})
since
\begin{equation}
\sigma_{3}-1=\frac{J_{x}}{J_{z}}\left(  \sigma_{1}-1\right)  .\label{eqadd20}%
\end{equation}
and we have denoted $p_{13}=\alpha_{1},p_{33}=\alpha_{3}$ and $p_{32}%
=\alpha_{12}$. The proof is finished by noting that $P_{1}$ and $P_{3}$ are
now exactly in the form of (\ref{eqp3}).
\end{proof}

Of course, the parameters $\alpha_{1},\alpha_{2},\alpha_{3}$ and $\alpha_{13}$
can be well chosen such that $P_{i}>0,i=1,2,3$, if the conditions in
(\ref{cond0}) are satisfied.

The positive definite solutions to the Lyapunov equation (\ref{lya}) can be
utilized to stabilize system (\ref{macs}) globally in the presence if actuator
saturation even when $B\left(  t\right)  $ is a time-varying periodic matrix.
Interested readers may refer to \cite{zhou15auto} for details.

\section{Conclusion}

This note has studied the stability properties of the linearized spacecraft
attitude control system. Necessary and sufficient conditions are respectively
provided to guarantee that the considered systems are Lyapunov stable
(neutrally stable) and polynomially stable.

\bigskip

\end{document}